\documentclass[12pt,reqno]{amsart}
\usepackage{amsmath,amssymb}
\usepackage[all]{xy}

\usepackage{hyperref}
\hypersetup{
	colorlinks,
	citecolor=red,
	filecolor=black,
	linkcolor=blue,
	urlcolor=black
}

\setlength{\textheight}{23cm}
\setlength{\textwidth}{16cm}
\setlength{\topmargin}{-0.8cm}
\setlength{\parskip}{0.3\baselineskip}
\hoffset=-1.4cm

\theoremstyle{plain}
\newtheorem{thm}{Theorem}[section]

\newtheorem{lmm}[thm]{Lemma}

\numberwithin{equation}{section}

\providecommand \xii{\mathcal{L}}

\providecommand \B{\mathrm{B}}

\begin{document}

\title[Genus one enumerative invariants in del-Pezzo surfaces]{Genus one
enumerative invariants in del-Pezzo surfaces with a fixed complex structure}

\author[I. Biswas]{Indranil Biswas}

\address{School of Mathematics,
Tata Institute of fundamental research, Homi Bhabha road, Mumbai 400005, India}

\email{indranil@math.tifr.res.in}

\author[R. Mukherjee]{Ritwik Mukherjee}

\address{School of Mathematics, Tata Institute of Fundamental
Research, Homi Bhabha Road, Mumbai 400005, India}

\email{ritwikm@math.tifr.res.in}

\author[V. Thakre]{Varun Thakre}

\address{Department of Mathematics, Harish Chandra Research Institute, 
Allahabad 211019, India}

\email{varunthakre@hri.res.in}

\subjclass[2010]{53D45, 14N35, 14J45}

\keywords{Elliptic curve, del-Pezzo surface, symplectic invariant, enumerative 
invariant.}

\date{}

\begin{abstract}
We obtain a formula for the number of genus one curves with a fixed 
complex structure of a given degree on a del-Pezzo surface that pass 
through an appropriate number of generic points of the surface. This 
enumerative problem is expressed as the difference between the 
symplectic invariant and an intersection number on the moduli space of 
rational curves.

\textit{R\'esum\'e.} \textbf{Invariants \'enum\'eratifs de genre un
avec une structure complexe fix\'ee pour des surfaces de del Pezzo.}
Nous obtenons une formule pour le nombre de courbes de genre un avec une 
structure complexe fix\'ee, de degr\'e donn\'e, et passant par un nombre 
appropri\'e de points g\'en\'eriques de la surface. La solution est exprim\'ee 
comme la diff\'erence entre l'invariant symplectique et un nombre 
d'intersection sur l'espace de modules de courbes rationnelles. 
\end{abstract}

\maketitle

\section{Introduction}\label{introduction}

Enumerative Geometry of rational curves in $\mathbb{P}^2_{\mathbb C}$ is a classical 
question in algebraic geometry. A natural generalization is to ask how many elliptic 
curves, with a fixed $j$-invariant, are there of a given degree that pass through 
the right number of generic points. In \cite{Rahul_genus_1} and \cite{ionel_genus1}, 
using methods of algebraic and symplectic geometry respectively, Pandharipande and 
Ionel obtain a formula for the number of degree $d$ genus one curves with a fixed 
complex structure in $\mathbb{P}^2_{\mathbb{C}}$ that pass through 
$3d-1$ generic points. In this paper, we extend their result to del-Pezzo surfaces. 

Let $X$ be a complex del-Pezzo surface and $\beta \,\in\, H_2(X,\, \mathbb{Z})$
a given homology class. Let $n_{0, \beta}$ denote the number of
rational curves of degree $\beta$
in $X$ that pass through $\delta_{\beta}$ generic points, where $\delta_{\beta} \,:=\,
\langle c_1(TX), \beta \rangle -1$. We prove the following:
\begin{thm}
\label{main_thm}
Let $X$ and $\beta$ be as above. Let
$n_{1, \beta}^{j}$ denote the number of 
elliptic curves with fixed $j$ invariant of degree $\beta$
in $X$ that pass through $\delta_{\beta}$ generic points. Then 
$$
n_{1, \beta}^{j}\, =\, \frac{2 g_{\beta}}{|\textnormal{Aut}(\Sigma_1, j) |} n_{0, 
\beta} \qquad \textnormal{where} \qquad g_{\beta} \,:=\, \frac{\beta\cdot \beta - c_1(TX) \cdot \beta +2}{2}\, ,
$$
$|\textnormal{Aut}(\Sigma_1, j) |$ denotes the number of automorphisms 
of a genus one Riemann surface with fixed $j$ invariant that fixes a point 
and ``$\cdot$'' denotes topological intersection.
\end{thm}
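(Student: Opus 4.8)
The plan is to relate the fixed-complex-structure genus one count $n_{1,\beta}^j$ to the rational curve count $n_{0,\beta}$ via the degeneration of elliptic curves to nodal rational curves. The key geometric idea is that an elliptic curve with fixed $j$-invariant mapped into $X$ can be understood through its normalization: generically, such a map factors through a genus one domain, but the relevant symplectic invariant also receives contributions from maps whose domain degenerates. The strategy is to follow the approach of Ionel and Pandharipande, expressing the enumerative number as the difference between a symplectic (Gromov--Witten type) invariant and a correction term, and then to show on a del-Pezzo surface this correction has a clean combinatorial interpretation.

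First I would set up the moduli space $\overline{M}_{1,k}(X,\beta)$ of genus one stable maps with the appropriate number of marked points, and fix the complex structure $j$ on the domain by pulling back via the forgetful map to $\overline{M}_{1,1}$. The number $n_{1,\beta}^j$ should be computed as an intersection number on the locus where the domain has fixed $j$-invariant, cutting down by the $\delta_\beta$ point constraints. The expected dimension count uses $\langle c_1(TX),\beta\rangle = \delta_\beta + 1$, so that $\delta_\beta$ generic point conditions on a genus one curve leave a finite count. Next I would invoke the genus one relation that compares this fixed-$j$ count to the symplectic invariant: the symplectic invariant counts maps from the fixed elliptic curve, but contributions arise from configurations where a rational component carrying all the markings is attached to a contracted elliptic tail. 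On a del-Pezzo surface, rationality and the adjunction-type formula make the contracted-tail contribution rigid and enumerable.

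The heart of the computation is identifying the correction term as $\frac{2g_\beta}{|\mathrm{Aut}(\Sigma_1,j)|}\, n_{0,\beta}$. Here I would argue that each rational curve $C$ of degree $\beta$ through the $\delta_\beta$ points contributes a count equal to the number of ways to attach a genus one tail at a node of $C$, weighted by automorphisms. The factor $g_\beta = \frac{\beta\cdot\beta - c_1(TX)\cdot\beta + 2}{2}$ is precisely the arithmetic genus of a degree $\beta$ curve by the adjunction formula, hence the number of nodes of a generic rational curve in the class $\beta$; this is where the del-Pezzo hypothesis enters decisively, since it guarantees the expected behavior of the canonical class and the smoothness/transversality needed for the node count to equal $g_\beta$. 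The factor of $2$ reflects the two local branches at each node where the elliptic tail can be glued, and the division by $|\mathrm{Aut}(\Sigma_1,j)|$ accounts for the automorphisms of the fixed genus one domain fixing the attaching point.

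The main obstacle I anticipate is establishing the transversality and properness needed to make the excess-intersection contribution on the boundary of $\overline{M}_{1,\beta}(X)$ rigorously equal to the nodal count, rather than merely dimensionally plausible. Specifically, one must show that the symplectic invariant differs from the genuine enumerative count exactly by the contribution of maps from a nodal domain (rational curve plus contracted elliptic component), and that on a del-Pezzo surface this boundary contribution is unobstructed and computed by the $g_\beta$ nodes of the generic rational curve. I would expect to use the convexity-like positivity of del-Pezzo surfaces (ampleness of $-K_X$ along the relevant curve classes) to rule out multiple-cover and contracted-component pathologies that would otherwise spoil the identification, and to ensure that the relevant moduli spaces have the expected dimension so that the difference is exactly the claimed multiple of $n_{0,\beta}$.
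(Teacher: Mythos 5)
Your proposal adopts the right overall skeleton (the same one the paper uses): $\mathrm{RT}_{1,\beta} \,=\, |\mathrm{Aut}(\Sigma_1,j)|\, n^j_{1,\beta} + \mathrm{CR}$, so the enumerative count is the difference of a symplectic invariant and a correction term. But the step you call ``the heart of the computation'' is wrong inside that framework, and it hides the two computations that actually have to be done. The correction term $\mathrm{CR}$ is the contribution of limits in which the genus-one component is \emph{contracted} (a ghost torus) while all the degree sits on rational bubbles; the point where the ghost torus attaches moves over the \emph{entire} rational curve, not over its nodes, so $\mathrm{CR}$ is not a node count of any kind. It is an excess-intersection number on the one-dimensional space $\overline{\mathcal{M}}_{0,1}(X,\beta)\cap\mathcal{H}^{\delta_\beta}$, namely
$\mathrm{CR} = \langle c_1(\mathcal{L}^*\otimes \mathrm{ev}^*TX/\mathbb{C}\langle\nu\rangle),\, [\overline{\mathcal{M}}_{0,1}(X,\beta)]\cap\mathcal{H}^{\delta_\beta}\rangle$,
whose evaluation requires a divisor formula for $c_1(\mathcal{L}^*)$ on the space of marked rational curves; the answer is $\mathrm{CR} = (\beta\cdot c_1(TX)-2)\,n_{0,\beta}$, not $\tfrac{2g_\beta}{|\mathrm{Aut}(\Sigma_1,j)|}n_{0,\beta}$. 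Moreover you never compute the other half, the symplectic invariant itself: one needs the Ruan--Tian composition law (genus reduction) to get $\mathrm{RT}_{1,\beta} = \sum_{i,j}g^{ij}\,\mathrm{RT}_{0,\beta}(e_i,e_j;p_1,\ldots,p_{\delta_\beta}) = (\beta\cdot\beta)\,n_{0,\beta}$. The theorem then follows from the purely arithmetic identity $(\beta\cdot\beta) - (\beta\cdot c_1(TX)-2) = 2g_\beta$; the adjunction-formula reading of $g_\beta$ as a number of nodes plays no role in this mechanism and is only an a posteriori interpretation.

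Your node-count picture is really the heuristic behind a different proof --- Pandharipande's algebraic one --- in which one maps the constrained space of genus-one stable maps to $\overline{\mathcal{M}}_{1,1}$ and compares the fiber over a generic $j$ with the fiber over $j=\infty$, whose points are irreducible nodal genus-one domains obtained by identifying the two preimages of a node of a rational curve. That route could plausibly be carried out on del-Pezzo surfaces, but your sketch supplies none of its machinery: the divisor relations on $\overline{\mathcal{M}}_{1,1}$ pulled back to the compactified space of maps, the local multiplicity of the $j$-map at each boundary point, and the separate treatment of strata with contracted components. Even your factor of $2$ is misattributed: gluing at ``either of the two branches of a node'' yields the \emph{same} stable map, so there are not two configurations per node; the $2$ arises as a multiplicity/ramification of the $j$-map at the boundary (equivalently, in the symplectic proof, from the term $\langle c_1(\mathcal{L}^*), [\overline{\mathcal{M}}_{0,1}(X,\beta)]\cap\mathcal{H}^{\delta_\beta}\rangle = -2n_{0,\beta}$). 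Finally, either route also needs the fact, proved in the paper's last section, that the complex structure of a del-Pezzo surface is genus-one regular for immersions, so that honest genus-one solutions contribute exactly $|\mathrm{Aut}(\Sigma_1,j)|$ perturbed solutions each and nothing more.
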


Note that $g_{\beta}$ in Theorem \ref{main_thm} coincides with the genus of a smooth 
degree $\beta$ curve on $X$.
The numbers $n_{0, \beta}$ are computed in 
\cite[p.~29]{KoMa} and \cite[Theorem 3.6]{Pandh_Gott} using a recursive formula. 
When $X\,:=\, \mathbb{P}^2$, our formula for $n_{1, \beta}^j$ is consistent with the 
formula of Pandharipande and Ionel in \cite{Rahul_genus_1} and \cite{ionel_genus1}.
In \cite{KoMa}, the authors actually give a formula to compute the genus 
$0$ Gromov--Witten invariants of the del-Pezzo surfaces, which a priori need not 
be enumerative. 
It is shown in 
\cite[page 63, last paragraph]{Pandh_Gott} that the numbers obtained in 
\cite{KoMa} are actually equal to $n_{0, \beta}$. 

The results of Pandharipande and Ionel generalize the result of P. Aluffi; in 
\cite{P_Aluffi}, he 
computes the number of genus one cubics with a fixed complex structure in 
$\mathbb{P}^2$ through $8$ generic points.  

The problem of enumerating elliptic curves with a fixed $j$-invariant 
has also been studied by tropical geometers. 
In \cite{Kerber_Markwig}, Kerber and Markwig 
compute the number of tropical elliptic curves in $\mathbb{P}^2$ with a fixed $j$-invariant. 
Combined with the correspondence theorem \cite[Theorem A]{Yoav_Len_Raghunathan}, one can conclude that the 
number computed is indeed the same as the number of plane elliptic
curves with a fixed $j$-invariant.
Currently, this question is also being studied for other surfaces. 
In \cite{Yoav_Len_Raghunathan}, 
Len and Ranganathan obtain a formula for the number of elliptic 
curves with a fixed $j$-invariant of a given degree 
for Hirzebruch surfaces, using methods from tropical geometry. 

\section{Enumerative versus symplectic invariant}

We now explain the basic idea to compute $n^j_{1, \beta}$. 
Let $(X, J, \omega)$ be a compact semi-positive symplectic manifold, with 
a compatible almost complex structure $J$ 
of dimension $2m$ and $\beta \,\in\, H_2(X, \mathbb{Z})$ 
be a homology class. Let $k$ be a nonnegative integer such that 
$k+ 2 g \,\geq\, 3$. 
Let $\alpha_1, \cdots, \alpha_k$ and $\gamma_1, \cdots, \gamma_l$ 
be integral homology classes in $H_*(X, \mathbb{Z})$ such that 
\begin{align}
\sum_{i=1}^{k} 2 m - \textnormal{deg}(\alpha_i) +
\sum_{j=1}^{l} (2m -2 - \textnormal{deg}(\gamma_j)) \,=\, 
2m(1-g) + 2 \langle c_1(TX), ~\beta \rangle\, . \label{dim_condition}
\end{align}
Fix pseudocycles $A_i$, $1\,\leq\, i\, \leq\, k$, and $B_j$, $1\,\leq\, j\, \leq\, l$,
on $X$ representing the homology classes $\alpha_i$ and $\gamma_j$. Fix a compact
Riemann surface $\Sigma_g$ of genus $g$; its complex structure will be denoted by $j$.
Define 
$$
\mathcal{M}_{g, k}^{\nu, j} (X, \beta; \alpha_1, \cdots, \alpha_k; \gamma_1,
\cdots, \gamma_k)\,:=\, \{ (u, y_1, \cdots, y_k) \,\in\,
\mathcal{C}^{\infty}(\Sigma_g, X) \times X^k\,\mid ~~u_*[\Sigma_g] = \beta,
$$
$$
\, \overline{\partial}_{j, J} u \,=\, \nu,\ 
u(y_i) \,\in\, A_i ~~\forall ~i = 1, \cdots ,k,\ 
~~\textnormal{Im}(u)\cap \B_j\,\not=\,\emptyset ~~\forall j =1, \cdots ,l \}\, ,
$$
where $\nu\,:\, \Sigma_g \times X\,\longrightarrow\, T^*\Sigma_g \otimes TX$ 
is a generic smooth perturbation and 
\[~\overline{\partial}_{j, J} u := \frac{1}{2} \Big( du + J\circ du \circ j\Big). \]
The symplectic invariant (or the Ruan--Tian invariant) is defined to be the 
signed cardinality of the above set, i.e., 
$$
\mathrm{RT}_{g, \beta}(\alpha_1, \cdots, \alpha_k; \gamma_1, \cdots, \gamma_l)\,:=\, 
\pm |\mathcal{M}_{g, k}^{\nu, j} (X, \beta; \alpha_1, \cdots, \alpha_k; \gamma_1, \cdots,
\gamma_k)|\, . 
$$
When $k=0$, we denote the invariant as 
$$\mathrm{RT}_{g, \beta}(\emptyset ; \gamma_1, \cdots, \gamma_l).$$ 
Furthermore, when $\gamma_1, \ldots, \gamma_l$ all denote the class of a point, 
then we abbreviate the invariant as $\mathrm{RT}_{g, \beta}$. 
Similarly, when $l=0$ we denote the invariant as 
$$\mathrm{RT}_{g, \beta}(\alpha_1, \cdots, \alpha_k; \emptyset).$$
If \eqref{dim_condition} is not satisfied, then 
we formally define the invariant to be zero.  

A natural question is to ask whether the 
symplectic invariant $\mathrm{RT}_{g,\beta}$
is equal to the enumerative invariant $n_{g,\beta}^j$. 
For $\mathbb{P}^2$, and more generally for del-Pezzo surfaces, the 
genus zero symplectic invariant is equal to the enumerative invariant 
\cite[page 267]{RT}. However, even for $\mathbb{P}^2$, the genus 
one symplectic invariant is not enumerative. 
In general, the following fact is true (\cite[Theorem 1.1]{zinger_one_nodal}) 
\begin{equation}\label{rt_equal_eg_plus_cr}
\mathrm{RT}_{1,\beta} \,=\, |\textnormal{Aut}(\Sigma_1, j) | n^j_{1, \beta}
+ \mathrm{CR}\, ,
\end{equation}
where $\mathrm{CR}$ denotes a correction term.  
Let us explain what this term means. 
First we note that the factor of $|\textnormal{Aut}(\Sigma_1, j) |$ is there because we do not 
mod out by automorphisms in the definition of $\mathcal{M}^{\nu, j}_{g,k}$. 
Hence, if 
$u: (\Sigma_1, j) \longrightarrow X$ is a solution to the $\overline\partial$--equation 
and the complex structure on $X$ is genus one regular, 
then there will 
be $|\textnormal{Aut}(\Sigma_1, j) |$ new solutions close to $u$ to the perturbed 
$\overline\partial$--equation. Next, we note that as $\nu \rightarrow 0$, a sequence of
$(J,\nu)$-holomorphic maps can also converge to a bubble tree whose
base (the torus) is a constant (ghost) map \cite[page 2]{ionel_genus1}. 
These maps will also contribute to the computation of $\mathrm{RT}_{1,\beta}$ invariant. 
This extra contribution is defined to be the correction term $\mathrm{CR}$. 

We now explain how to compute the correction term. 
Let $\mathcal{M}_{0,n}(X, \beta)$ denote 
the moduli space of rational degree $\beta$ curves on $X$ that 
represent the class $\beta\,\in\, H_2(X,\, {\mathbb Z})$ 
and are equipped with $n$ ordered marked points, modulo equivalence. 
In other words,
$$
\mathcal{M}_{0,n}(X, \beta)\,:=\, \{ (u, y_1, \cdots, y_n) \,\in\,
\mathcal{C}^{\infty}(\mathbb{P}^1, X) \times
(\mathbb{P}^1)^n\,\mid ~ \overline{\partial}u\,=\,0, ~~u_*[\mathbb{P}^1]
\,= \,\beta \}/\text{PSL} (2, \mathbb{C})\, , 
$$
with $\text{PSL}(2, \mathbb{C})$ acting diagonally on $\mathbb{P}^1\times
(\mathbb{P}^1)^n$. 
Let $\overline{\mathcal{M}}_{0,n}(X, \beta)$ denote the stable map 
compactification of $\mathcal{M}_{0,n}(X, \beta)$. 

Let us now focus on $\overline{\mathcal{M}}_{0,1}(X, \beta)$, the moduli 
space of curves with one marked point. Let $\mathcal{H}$ be the divisor in 
$\overline{\mathcal{M}}_{0,1}(X, \beta)$  
corresponding to the extra condition that the curve
passes through a given point. Let 
$\mathcal{L} \longrightarrow \overline{\mathcal{M}}_{0,1}(X, \beta)$ 
and $\textnormal{ev}: \overline{\mathcal{M}}_{0,1}(X, \beta) \longrightarrow X$
be the universal tangent bundle and the evaluation map 
at the marked point. 
Following the same argument as in
\cite[Lemma 1.23]{ionel_genus1},
we conclude that the bundle 
$\textnormal{ev}^*TX \longrightarrow \overline{\mathcal{M}}_{0,1}(X, \beta) \cap \mathcal{H}^{\delta_{\beta}}$ 
admits a nowhere vanishing section $\nu$. 
This is because the rank of $\textnormal{ev}^*TX$ is two, while the dimension of the 
variety $\overline{\mathcal{M}}_{0,1}(X, \beta) \cap \mathcal{H}^{\delta_{\beta}}$ 
is one. Hence 
$\textnormal{ev}^*TX \longrightarrow \overline{\mathcal{M}}_{0,1}(X, \beta) \cap \mathcal{H}^{\delta_{\beta}}$ 
admits a trivial sub bundle spanned by $\nu$, which we denote 
as $\mathbb{C}\langle \nu \rangle$. When $X:= \mathbb{P}^2$, it is shown in 
\cite[Lemma 1.25]{ionel_genus1}, that 
the correction term is given by
\begin{equation}
\mathrm{CR} \,=\, 
\langle c_1(\mathcal{L}^* \otimes \textnormal{ev}^*TX/ \mathbb{C} \langle \nu \rangle), 
~[\overline{\mathcal{M}}_{0,1}(X, \beta)] \cap \mathcal{H}^{\delta_{\beta}} \rangle. 
\label{cr_formula}
\end{equation}
A more detailed justification of \eqref{cr_formula} is given in \cite{zinger_one_nodal}, by using the 
results of \cite{ZiSG}. 
Furthermore, the gluing construction in \cite{ZiSG} 
is valid in general for K{\"a}hler manifolds \cite[page
8]{ZiSG}. Hence, we conclude that 
\eqref{cr_formula} holds for del-Pezzo surfaces as well. 
Zinger also pointed out this fact to the second author of this paper 
in a personal communication (\cite{ZiCommuinication}).

In the next section we will obtain a formula for $c_1(\mathcal{L}^*)$ and  
compute the right hand side of \eqref{cr_formula}. 
The left hand side of \eqref{rt_equal_eg_plus_cr} is computed using the formula given in 
\cite{RT}. Hence, we obtain $n^j_{1, \beta}$. 

\section{Computation of the correction term} 

We will now give a self contained proof of 
obtaining a formula for  $c_1(\mathcal{L}^*)$ and hence computing the correction term. 
Alternatively, one can also compute the Chern classes by using the dilation equation and the divisor 
equation as given in \cite[Section 26.3]{MirrSymm}. 

\begin{lmm}
\label{c1_divisor_ionel}
On $\overline{\mathcal{M}}_{0,1}(X, \beta)$, the following equality of divisors holds:
\begin{equation}
c_1(\xii^*) \,=\, \frac{1}{(\beta \cdot x_1)^2}
\Big( (x_1 \cdot x_1) \mathcal{H} -2 (\beta \cdot x_1) \textnormal{ev}^*(x_1) +
\sum_{\substack{\beta_1+ \beta_2= \beta, \\ \beta_1, \beta_2 \neq 0}}
\mathcal{B}_{\beta_1, \beta_2} (\beta_2 \cdot x_1)^2 \Big)\, ,\label{chern_class_divisor}
\end{equation}
where $\mathcal{H}$ is the locus satisfying the extra condition that the curve
passes through a given point, $\mathcal{B}_{\beta_1, \beta_2}$ denotes the
boundary stratum corresponding to the splitting into a
degree $\beta_1$ curve and degree $\beta_2$ curve with the last marked point
lying on the degree $\beta_1$ component and $x_i \,:= \,c_i(TX)$.
\end{lmm}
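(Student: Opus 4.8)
The plan is to realize $c_1(\xii^*)$ as a $\psi$-class on the universal curve and to compute it by intersecting with the divisor cut out by a generic anticanonical section, in the spirit of \cite[Lemma 1.23]{ionel_genus1}. Let $\pi\,:\,\mathcal{C}\,\to\,\overline{\mathcal{M}}_{0,1}(X,\beta)$ denote the universal curve, $f\,:\,\mathcal{C}\,\to\,X$ the universal map, and $\sigma\,:\,\overline{\mathcal{M}}_{0,1}(X,\beta)\,\to\,\mathcal{C}$ the section given by the marked point, so that $\textnormal{ev}\,=\,f\circ\sigma$ and $\xii\,=\,\sigma^*T_\pi$, where $T_\pi$ is the relative tangent sheaf. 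Since the normal bundle of $\sigma$ inside the fibres of $\pi$ is exactly $\xii$, we have $\sigma^*[\sigma]\,=\,c_1(\xii)$, hence $c_1(\xii^*)\,=\,-\sigma^*[\sigma]$. Writing $d\,:=\,\beta\cdot x_1\,=\,\langle c_1(TX),\beta\rangle$, the first step is to choose a generic section $s\,\in\,H^0(X,\,-K_X)$ (available since $-K_X$ is ample, and globally generated after passing to a suitable multiple if necessary), with smooth zero locus $C\,\subset\,X$ in the class $x_1$, and to set $\mathcal{Z}\,:=\,\textnormal{div}(f^*s)\,\subset\,\mathcal{C}$. Generically $\mathcal{Z}$ is a reduced, $d$-sheeted multisection of $\pi$, with $[\mathcal{Z}]\,=\,f^*(x_1)$ and $\sigma^*[\mathcal{Z}]\,=\,\textnormal{ev}^*(x_1)$.

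Next I would exploit the fact that $[\mathcal{Z}]$ and $[\sigma]$ restrict to degrees $d$ and $1$ on a generic fibre, so that over the interior their difference is pulled back from the base:
\begin{equation*}
[\mathcal{Z}]\,=\,d\,[\sigma]\,+\,\pi^*D\,+\,B,
\end{equation*}
where $D$ is a divisor class on $\overline{\mathcal{M}}_{0,1}(X,\beta)$ and $B$ is supported on the boundary. Applying $\sigma^*$ and using $\sigma^*\pi^*\,=\,\textnormal{id}$ gives the first relation $\textnormal{ev}^*(x_1)\,=\,-d\,c_1(\xii^*)\,+\,D\,+\,\sigma^*B$. For a second relation I would multiply by $[\mathcal{Z}]$ and push forward by $\pi$: here $\pi_*([\mathcal{Z}]^2)\,=\,\pi_*f^*(x_1^2)\,=\,(x_1\cdot x_1)\,\mathcal{H}$, because $f^*[\textnormal{pt}]$ pushes to the divisor $\mathcal{H}$ of curves through a point, while $\pi_*([\mathcal{Z}][\sigma])\,=\,\textnormal{ev}^*(x_1)$ and $\pi_*([\mathcal{Z}]\,\pi^*D)\,=\,d\,D$. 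This yields $(x_1\cdot x_1)\,\mathcal{H}\,=\,2d\,\textnormal{ev}^*(x_1)$ is not quite it; rather $(x_1\cdot x_1)\,\mathcal{H}\,=\,d\,\textnormal{ev}^*(x_1)\,+\,d\,D\,+\,\pi_*([\mathcal{Z}]B)$. Eliminating $D$ between the two relations produces
\begin{equation*}
(\beta\cdot x_1)^2\,c_1(\xii^*)\,=\,(x_1\cdot x_1)\,\mathcal{H}\,-\,2(\beta\cdot x_1)\,\textnormal{ev}^*(x_1)\,+\,\big(d\,\sigma^*B\,-\,\pi_*([\mathcal{Z}]B)\big),
\end{equation*}
which already matches \eqref{chern_class_divisor} away from the boundary.

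The main obstacle is the boundary analysis, that is, identifying the correction $d\,\sigma^*B-\pi_*([\mathcal{Z}]B)$ with $\sum_{\beta_1+\beta_2=\beta}(\beta_2\cdot x_1)^2\,\mathcal{B}_{\beta_1,\beta_2}$. Over a generic point of a stratum $\mathcal{B}_{\beta_1,\beta_2}$ the domain breaks into two components carrying the marked point on the $\beta_1$-part, and $\mathcal{Z}$ distributes $d_1\,=\,\beta_1\cdot x_1$ of its points on that component and $d_2\,=\,\beta_2\cdot x_1$ on the other; the failure of $[\mathcal{Z}]-d[\sigma]$ to be $\pi^*$-pulled-back is measured precisely by these $d_2$ points sitting on the component away from $\sigma$. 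I would carry out a local computation on the universal curve near such a node — tracking how the section $\sigma$, the node, and the points of $\mathcal{Z}$ interact — to show that each stratum contributes with multiplicity $d_2^{\,2}\,=\,(\beta_2\cdot x_1)^2$; the squaring is forced by the two appearances of $[\mathcal{Z}]$, once in the $\sigma^*$-relation and once in the $\pi_*[\mathcal{Z}]^2$-relation. Establishing transversality and the exact local multiplicities at these strata, together with ruling out extra contributions from loci where $f^*s$ acquires higher-order zeros or where the differential $du$ degenerates at the marked point, is the delicate part; genericity of $s$ and of the point defining $\mathcal{H}$ is used throughout to ensure that all intersections are transverse and that no stratum of excess dimension interferes.
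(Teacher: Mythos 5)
Your proposal is correct, but it takes a genuinely different route from the paper. The paper's proof (following Ionel) passes to the $(\beta\cdot x_1)^2$--to--one cover $\widetilde{\mathcal{M}}=\textnormal{ev}_2^{-1}(\mu_1)\cap\textnormal{ev}_3^{-1}(\mu_2)\subset\overline{\mathcal{M}}_{0,3}(X,\beta)$ obtained by adding two marked points constrained to lie on pseudocycles dual to $x_1$, writes down the explicit meromorphic section $\phi=\frac{(y_2-y_3)\,dy_1}{(y_1-y_2)(y_1-y_3)}$ of $\pi^*\mathcal{L}^*$ (a $\mathrm{PSL}(2,\mathbb{C})$--invariant trivialization of $q_1^*K_{\mathbb{P}^1}\otimes\mathcal{O}(\Delta_{12}+\Delta_{13}-\Delta_{23})$), and reads off $c_1(\pi^*\mathcal{L}^*)$ as its zero-minus-pole divisor $\{y_2=y_3\}-\{y_1=y_2\}-\{y_1=y_3\}$, so that all three terms of \eqref{chern_class_divisor}, boundary included, appear at once as pushforwards of these loci. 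You instead work on the universal curve, use the identity $c_1(\mathcal{L}^*)=-\sigma^*[\sigma]$, compare the multisection $\mathcal{Z}=f^{-1}(C)$ with $d[\sigma]$ modulo pullbacks and boundary, and eliminate the unknown class $D$ between the two relations coming from $\sigma^*$ and $\pi_*([\mathcal{Z}]\cdot{-})$. This is essentially the intersection-theoretic derivation underlying the dilaton/divisor-equation method that the paper mentions as an alternative (\cite[Section 26.3]{MirrSymm}). What each buys: your argument is systematic (no clever section has to be guessed; $D$ is eliminated rather than computed), but it needs an honest effective divisor representing the dual of $x_1$ --- an anticanonical curve, which del Pezzo surfaces possess --- whereas the paper's pseudocycle argument makes sense in the symplectic category and concentrates all bookkeeping into one divisor computation on the cover.

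The one step you leave open --- the boundary multiplicity --- does close, and with less delicacy than you anticipate; no local analysis at the node is required. Over a boundary divisor, $\pi^{-1}(\mathcal{B}_{\beta_1,\beta_2})$ has two components $\mathcal{C}_1^{(\beta_1,\beta_2)}$ and $\mathcal{C}_2^{(\beta_1,\beta_2)}$ (carrying, respectively not carrying, the marked point), and since $[\mathcal{C}_1^{(\beta_1,\beta_2)}]+[\mathcal{C}_2^{(\beta_1,\beta_2)}]=\pi^*\mathcal{B}_{\beta_1,\beta_2}$ you may take $B=\sum b_{\beta_1,\beta_2}[\mathcal{C}_2^{(\beta_1,\beta_2)}]$. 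Restricting $[\mathcal{Z}]=d[\sigma]+\pi^*D+B$ to the two components of a generic boundary fiber (on which $[\mathcal{Z}]$ has degrees $d_1=\beta_1\cdot x_1$ and $d_2=\beta_2\cdot x_1$, while $[\mathcal{C}_2^{(\beta_1,\beta_2)}]$ has degrees $+1$ and $-1$) forces $b_{\beta_1,\beta_2}=-d_2$. Then $\sigma^*[\mathcal{C}_2^{(\beta_1,\beta_2)}]=0$, because stability keeps the marked point off the node, so $\sigma(\overline{\mathcal{M}}_{0,1}(X,\beta))$ meets $\mathcal{C}_2^{(\beta_1,\beta_2)}$ only in codimension at least two; and $\pi_*\bigl([\mathcal{Z}]\cdot[\mathcal{C}_2^{(\beta_1,\beta_2)}]\bigr)=d_2\,\mathcal{B}_{\beta_1,\beta_2}$, because $\mathcal{Z}$ meets the $\beta_2$--component in $d_2$ points. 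Hence $d\,\sigma^*B-\pi_*([\mathcal{Z}]\cdot B)=\sum_{\beta_1+\beta_2=\beta}(\beta_2\cdot x_1)^2\,\mathcal{B}_{\beta_1,\beta_2}$, which is exactly the boundary term of \eqref{chern_class_divisor}. One cosmetic point: replacing $-K_X$ by a multiple (as you suggest for global generation) changes the class of $\mathcal{Z}$ to $m\,x_1$; this is harmless since the right-hand side of \eqref{chern_class_divisor} is homogeneous of degree zero in $x_1$, but it is also unnecessary, as a generic member of the anticanonical system itself suffices on every del Pezzo surface.
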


\begin{proof}
The proof is similar to the one given in \cite{ionel_genus1}. Let
$\mu_1\, , \mu_2 \,\in\, X$ be two generic pseudocycles in $X$ that represent the class 
Poincar\'e dual to 
$x_1$.  
Let $\widetilde{\mathcal{M}}$ be a cover of $\overline{\mathcal{M}}_{0,1}(X, \beta)$ 
with two additional
marked points with the last two marked points lying on $\mu_1$ and $\mu_2$
respectively. More precisely,
\begin{align*}
\widetilde{\mathcal{M}} &\,:=\,  \textnormal{ev}_{2}^{-1}(\mu_1) \cap
\textnormal{ev}_{3}^{-1}(\mu_2) \subset
\overline{\mathcal{M}}_{0, 3}(X, \beta)\,
\end{align*}
where $\textnormal{ev}_2$ and $\textnormal{ev}_3$ denote the 
evaluation maps at the second and third marked points respectively. 
Note that the projection $\pi\,:\, \widetilde{\mathcal{M}} \,\longrightarrow\,
\overline{\mathcal{M}}_{0,1}(X, \beta)$ that forgets the last two marked points is a
$(\beta \cdot x_1)^2$--to--one map. We now construct a meromorphic section
\begin{align}
\phi\, :\,\widetilde{\mathcal{M}} \,\longrightarrow\, \pi^*\xii^*  \qquad \textnormal{given by} 
\qquad \phi ([u, y_{_1};
y_{_{2}},
y_{_{3}}]) &:= \frac{{(y_{_2}} - y_{_{3}})
d y_{_{1}}}
{(y_{_{1}}-y_{_{2}})(y_{_{1}}-
y_{_{3}})}. \label{section_ionel}
\end{align}
The right--hand side of \eqref{section_ionel} involves an abuse of notation: it is to be
interpreted in an affine coordinate chart and then extended as a meromorphic section
on the whole of $\mathbb{P}^1$. Note that on $({\mathbb P}^1)^3$, the holomorphic
line bundle
$$
\eta\, :=\,
q^*_1K_{{\mathbb P}^1}\otimes{\mathcal O}_{({\mathbb P}^1)^3}(\Delta_{12}
+\Delta_{13}-\Delta_{23})$$ is trivial, where $q_1\, :\, ({\mathbb P}^1)^3\,
\longrightarrow\,{\mathbb P}^1$ is the projection to the first factor and
$\Delta_{jk}\,\subset\, ({\mathbb P}^1)^3$ is the divisor consisting of all points
$(z_i\, ,z_2\, ,z_3)$ such that $z_j\,=\, z_k$. The diagonal action of ${\rm PSL}(2,
{\mathbb C})$ on $({\mathbb P}^1)^3$ lifts to $\eta$ preserving its
trivialization. The section $\phi$ in \eqref{section_ionel} is given by this trivialization of $\eta$.

Since $c_1(\pi^*\xii^*)$ is the zero divisor minus the
pole divisor of $\phi$, we gather that
\begin{align*}
c_1(\pi^*\xii^*) &\,=\, \{ y_{_{2}} = y_{_{3}}\}
-\{y_{_{1}}=y_{_{2}} \}-
\{y_{_{1}}= y_{_{3}}\}\, .
\end{align*}
When projected down to $\overline{\mathcal{M}}_{0,1}(X,\beta)$, the divisor
$\{ y_{_{2}} = y_{_{3}}\}$ becomes
$$(x_1\cdot x_1)\mathcal{H} + (\beta_2 \cdot x_1)^2 \mathcal{B}_{\beta_1, \beta_2},$$
while both the divisors $\{y_{_{1}}=y_{_{2}} \}$ and
$\{y_{_{1}}=y_{_{3}} \}$ become
$(\beta \cdot x_1) \textnormal{ev}^*(x_1)$.
Since $\widetilde{\mathcal{M}}$
is a $(\beta \cdot x_1)^2$--to--one cover of $\overline{\mathcal{M}}_{0,1}(X,\beta)$, 
we obtain \eqref{chern_class_divisor}.
\end{proof}

Using Lemma \ref{c1_divisor_ionel}, we conclude that 
\begin{equation}
\langle c_1(\mathcal{L}^*), ~[\overline{\mathcal{M}}_{0,1}(X, \beta)] 
\cap \mathcal{H}^{\delta_{\beta}} \rangle \,=\, - 2n_{0, \beta}. 
\label{chern_class_calculation}
\end{equation}
To see why this is so, we first note that 
$\overline{\mathcal{M}}_{0,1}(X, \beta) \cap \mathcal{H}^{\delta_{\beta}+1}$ is zero. 
This is because the number of rational curves through $\delta_{\beta}+1$ generic points is zero. 
Next, we note that 
$\overline{\mathcal{M}}_{0,1}(X, \beta) \cap \mathcal{H}^{\delta_{\beta}} \cap \mathcal{B}_{\beta_1, \beta_2}$ 
is also zero. This is because the number of $\beta$ curves 
which pass through $\delta_{\beta}$ points 
can not split into a degree $\beta_1$ curve and a degree $\beta_2$ curve. This is because such a 
split curve will pass through $\delta_{\beta_1} + \delta_{\beta_2}$ points, which is one less than 
$\delta_{\beta}$. So a split curve can not pass through $\delta_{\beta}$ generic points. 
Finally we note that for any homology class $\mu \in H_2(X, \mathbb{Z})$, the following is true  
\begin{align}
[\overline{\mathcal{M}}_{0,1}(X, \beta)] \cap \mathcal{H}^{\delta_{\beta}}\cap 
\textnormal{ev}^*[\mu] = n_{0,\beta} (\beta \cdot \mu). \label{rt_beta_multiply}
\end{align}
To see why this is so, we note that the left hand side of \eqref{rt_beta_multiply} counts the number of degree 
$\beta$ rational curves through $\delta_{\beta}$ points and one marked point, such that the 
marked point lies on some cycle representing the class $\beta$. There are $\beta \cdot \mu$ choices 
for that marked point to lie, which gives us the right hand side of \eqref{rt_beta_multiply}. These three 
facts give us \eqref{chern_class_calculation}. Note that, when we say $\textnormal{ev}^*[\mu]$, we mean the 
pullback of the cohomology class Poincar\'e dual to $\mu$ (inside $X$). 
Using \eqref{rt_beta_multiply} (with $\mu:= c_1(TX)$), we conclude that  
\begin{equation}
\label{chern_class_calculation_again}
\langle c_1(\textnormal{ev}^*TX), ~[\overline{\mathcal{M}}_{0,1}(X,\beta)] \cap
\mathcal{H}^{\delta_{\beta}} \rangle \,=\, 
\Big(\beta\cdot c_1(TX) \Big) n_{0,\beta}\, .
\end{equation}
From \eqref{chern_class_calculation}, \eqref{chern_class_calculation_again} 
and \eqref{cr_formula} it follows that 
\begin{align}
\mathrm{CR} \,=\, \Big(\beta\cdot c_1(TX)-2\Big) n_{0,\beta}\, . \label{CR_calculation}
\end{align}

\section{Computation of the symplectic invariant}

We now compute the symplectic invariant 
$\mathrm{RT}_{1,\beta}:= \mathrm{RT}_{1,\beta}(\emptyset; p_1, \ldots, p_{\delta_{\beta}})$ 
using the formula
\cite[page 263, (1.2)]{RT}. 
Let $e_1\, , e_2\, , \cdots\, , e_k$ be a basis for $H_*(X,\, \mathbb{Z})$. Let
$$
g_{ij}\,:=\, e_{i} \cdot e_j \ \ \qquad \textnormal{ and }\ \ \qquad g^{ij}\,:=
\,\Big(g^{-1}\Big)_{ij}\, .
$$
If the degrees of $e_i$ and $e_j$ do not add up to be the dimension of $X$ 
then define $g_{ij}$ to be zero. 
Using \cite[page 263, (1.2)]{RT} 
we conclude that 
\begin{align}
\mathrm{RT}_{1,\beta}(\emptyset; p_1, \ldots, p_{\delta_{\beta}}) =  
\sum_{i,j} g^{ij} \mathrm{RT}_{0,\beta}(e_i, e_j; p_1, \ldots, p_{\delta_{\beta}}) 
& = \sum_{i,j} g^{ij} n_{0, \beta} (\beta \cdot e_i)(\beta \cdot e_j) \nonumber \\ 
& = (\beta \cdot \beta) n_{0, \beta}.\, \label{RT_calculation}
\end{align}
The last equality follows by writing $\beta$ in the given basis $e_i$ and using the 
definition of $g^{ij}$; the second equality follows from the same we justify \eqref{rt_beta_multiply}. 
Equations \eqref{RT_calculation}, \eqref{CR_calculation} and \eqref{rt_equal_eg_plus_cr} give 
us the formula of Theorem \ref{main_thm}.

\section{Regularity of the complex structure for del-Pezzo surfaces}
We now show that the complex structure on the del-Pezzo surfaces is genus one regular 
for immersion. In the statement of Theorem \ref{main_thm} the curve $u$ passes 
passes through $\delta_{\beta}$ generic points. Hence the curve is going to be 
an immersion and hence it suffices to prove regularity for immersions. 

\begin{lmm}
Let $X$ be $\mathbb{P}^2$ blown up at $k$ points and 
$(\Sigma_1, j)$ a compact genus $1$ Riemann surface with a 
complex structure $j$. 
Let 
$u: \Sigma_1 \longrightarrow X$ be a holomorphic map representing the 
class $\beta := d L - m_1 E_1- \ldots -m_k E_k \in H_2(X, \mathbb{Z})$, 
where $L$ and $E_i$ denote the class of a line and the exceptional divisors 
respectively. 
Then $D_u$, the linearization of the $\overline{\partial}_{j,J}$ at $u$ is surjective, 
provided $d >0$ and $u$ is an immersion. 
In particular, the complex structure on the del-Pezzo surface 
is genus $1$ regular for immersions.  
\end{lmm}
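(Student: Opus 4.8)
The plan is to turn surjectivity of $D_u$ into a cohomology vanishing statement and then into a positivity/non-splitting problem. Since the almost complex structure $J$ on the del-Pezzo surface $X$ is integrable and $u$ is $J$-holomorphic, the linearized operator $D_u$ agrees with the Dolbeault operator $\overline{\partial}$ on the holomorphic bundle $E:=u^*TX$ over $\Sigma_1$, so $D_u$ is surjective if and only if $H^1(\Sigma_1,\,u^*TX)=0$. Because $u$ is an immersion there is a normal bundle sequence
\[ 0\longrightarrow T\Sigma_1\longrightarrow u^*TX\longrightarrow N_u\longrightarrow 0 . \]
As $\Sigma_1$ has genus one, $T\Sigma_1\cong\mathcal{O}_{\Sigma_1}$ and $\deg N_u=\langle c_1(TX),\beta\rangle$. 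Since $X$ is del-Pezzo, $c_1(TX)=-K_X$ is ample, so $\langle c_1(TX),\beta\rangle>0$ for the effective class $\beta$ with $d>0$; thus $\deg N_u>0=2g-2$ and $H^1(\Sigma_1,N_u)=0$. The long exact sequence then collapses to
\[ H^0(N_u)\xrightarrow{\ \delta\ }H^1(T\Sigma_1)\cong\mathbb{C}\longrightarrow H^1(u^*TX)\longrightarrow 0, \]
so $H^1(u^*TX)=0$ \emph{precisely when the connecting map $\delta$ is surjective}. By Serre duality (using $K_{\Sigma_1}\cong\mathcal{O}_{\Sigma_1}$) this is equivalent to the non-vanishing of the extension class of the normal bundle sequence, i.e. to the statement that this sequence does \emph{not} split holomorphically.

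Equivalently, one checks that any nonzero section of $u^*\Omega^1_X$ must project to a nonzero, hence nowhere vanishing, holomorphic one-form on $\Sigma_1$, and would therefore provide a holomorphic splitting; so the entire lemma reduces to proving $u^*TX\not\cong\mathcal{O}_{\Sigma_1}\oplus N_u$. I note that on an elliptic curve an \emph{indecomposable} rank-two bundle of positive degree already has vanishing $H^1$, so this single split type is the only possible failure. As a model and a building block I would first record the case $X=\mathbb{P}^2$: pulling back the Euler sequence gives $0\to\mathcal{O}_{\Sigma_1}\to(\overline{u}^{\,*}\mathcal{O}(1))^{\oplus 3}\to\overline{u}^{\,*}T\mathbb{P}^2\to 0$, where $\overline{u}$ is the map to $\mathbb{P}^2$; since $\deg\overline{u}^{\,*}\mathcal{O}(1)=d\ge 1>0$ the middle term has no $H^1$, and therefore $H^1(\overline{u}^{\,*}T\mathbb{P}^2)=0$ with no immersion hypothesis at all.

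For a general del-Pezzo surface I would compare with $\mathbb{P}^2$ through the blow-down $\pi\colon X\to\mathbb{P}^2$, writing $\overline{u}=\pi\circ u$, a degree $d$ map. The differential $d\pi$ gives a short exact sequence $0\to TX\to\pi^*T\mathbb{P}^2\to\mathcal{Q}\to 0$ with $\mathcal{Q}$ supported on the exceptional locus $E=\bigcup_i E_i$. Because $d>0$ the image of $u$ is not contained in any $E_i$, so the pulled-back bundle map $u^*TX\to\overline{u}^{\,*}T\mathbb{P}^2$ is a fibrewise isomorphism away from the finitely many points of $u^{-1}(E)$; being a generically injective map out of a bundle on the smooth curve $\Sigma_1$, it is injective, and its cokernel $u^*\mathcal{Q}$ is a skyscraper of length $\sum_i(\beta\cdot E_i)=\sum_i m_i$. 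Hence
\[ 0\longrightarrow u^*TX\longrightarrow\overline{u}^{\,*}T\mathbb{P}^2\longrightarrow u^*\mathcal{Q}\longrightarrow 0 \]
is exact, and combined with $H^1(\overline{u}^{\,*}T\mathbb{P}^2)=0$ from the model case this shows that $H^1(u^*TX)=0$ exactly when the evaluation map $H^0(\overline{u}^{\,*}T\mathbb{P}^2)\to H^0(u^*\mathcal{Q})$ is surjective.

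The main obstacle is precisely this surjectivity. The fibre of $\mathcal{Q}$ over a point $q\in E_i$ is naturally $T_{p_i}\mathbb{P}^2/\ell_q$, where $p_i=\pi(E_i)$ and $\ell_q\subset T_{p_i}\mathbb{P}^2$ is the tangent direction recorded by $q$, so surjectivity means that global sections of $\overline{u}^{\,*}T\mathbb{P}^2$ realize arbitrary prescribed values, modulo the curve's own tangent direction, simultaneously at all points lying over the blown-up points. The delicate feature is that when $\overline{u}$ passes through a single $p_i$ along several branches (the case $m_i\ge 2$), the restrictions of global projective vector fields take the \emph{same} value on every branch, so one cannot separate branches using $\mathfrak{pgl}_3$ alone; the separation must come from the higher sections of $\overline{u}^{\,*}T\mathbb{P}^2$ that are present because $d$ is large, together with the general position of $p_1,\dots,p_k$. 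I would establish the surjectivity either by a direct rank computation of the evaluation map written through the Euler sequence, or by a generic-position argument showing the locus where it degenerates is avoided; this is equivalent to showing the extension class $e\in H^1(\Sigma_1,N_u^{-1})$ of the normal bundle sequence is nonzero. This non-splitting, governed by the transverse geometry of $u$ along the exceptional divisors, is where essentially all the difficulty of the lemma is concentrated, and it is exactly the place where the immersion hypothesis and the del-Pezzo (ampleness, general-position) hypotheses are genuinely used.
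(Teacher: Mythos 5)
Your reduction is correct and is, in substance, identical to the paper's: for integrable $J$, surjectivity of $D_u$ amounts to $H^1(\Sigma_1, u^*TX)=0$; the immersion hypothesis gives the exact sequence $0 \to T\Sigma_1 \to u^*TX \to N_u \to 0$ of vector bundles; del-Pezzo-ness (ampleness of $-K_X$) gives $\deg N_u = \langle c_1(TX),\beta\rangle > 0$, hence $H^1(\Sigma_1,N_u)=0$; and, since $K_{\Sigma_1}\cong\mathcal{O}_{\Sigma_1}$, Serre duality makes the vanishing of $H^1(\Sigma_1,u^*TX)$ equivalent to the non-vanishing of the extension class, i.e.\ to the non-splitting of the sequence. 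Up to this point your argument is sound, and in one respect more careful than the paper's, which merely asserts $\deg(u^*TX)>0$ where you correctly derive it from ampleness applied to an effective class.

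The genuine gap is that you never prove the non-splitting. Your proposal ends with ``I would establish the surjectivity either by a direct rank computation \ldots or by a generic-position argument,'' which is a plan, not a proof, and this step cannot be deferred as a detail: given $H^1(\Sigma_1,N_u)=0$, the claim ``the normal bundle sequence does not split'' is \emph{equivalent} to the conclusion of the lemma (if it split, then $H^1(\Sigma_1,u^*TX)\cong H^1(\Sigma_1,\mathcal{O}_{\Sigma_1})\oplus H^1(\Sigma_1,N_u)\neq 0$), and your blow-down reformulation --- surjectivity of $H^0(\Sigma_1,\bar u^*T\mathbb{P}^2)\to H^0(\Sigma_1,u^*\mathcal{Q})$ --- is again equivalent to that conclusion. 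So what you have actually produced is a chain of correct equivalences terminating at the statement to be proved; the interpolation problem you single out (separating several branches of $\bar u$ through a common blown-up point, where the finite-dimensional space $H^0(T\mathbb{P}^2)$ does not suffice) \emph{is} the lemma, and it is left open. For comparison, the paper's proof follows your first route rather than the blow-down route: it considers the extension class $\psi\in H^1(\Sigma_1,Q^*)$ of the normal bundle sequence, asserts that the sequence does not split, and uses the non-degeneracy of the Serre pairing $H^0(\Sigma_1,Q)\otimes H^1(\Sigma_1,Q^*)\to\mathbb{C}$ only to convert $\psi\neq 0$ into non-vanishing (hence, the target being one-dimensional, surjectivity) of the connecting homomorphism $\rho\colon H^0(\Sigma_1,Q)\to H^1(\Sigma_1,T\Sigma_1)$. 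In other words, the point at which you stop is exactly the point the paper treats as an assertion; your diagnosis of where the difficulty is concentrated is accurate, but a proposal that stops there has not established the lemma.
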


\begin{proof}
We first note that if ${\mathcal L}$ is a holomorphic line bundle on $(\Sigma_1, j)$ of
positive degree, then the cup product
$$H^0(\Sigma_1, {\mathcal L})\otimes H^1(\Sigma_1, {\mathcal L}^*)
\,\longrightarrow\, H^1(\Sigma_1, \mathcal L)\otimes {\mathcal L}^*)\,=\, \mathbb C
$$
is nondegenerate. Indeed, this coincides with the Serre duality pairing because the
canonical line bundle of $\Sigma_1$ is trivial, and hence the pairing is
nondegenerate. Next consider the short exact sequence of vector bundles on $\Sigma_1$
given by the differential of $u$
\begin{equation}\label{g1}
0\,\longrightarrow\, T\Sigma_1\,\stackrel{du}{\longrightarrow}\,u^*TX
\,\longrightarrow\,  Q\,:=\, (u^*TX)/T\Sigma_1\,\longrightarrow\, 0\,.
\end{equation}
Let
\begin{equation}\label{g2}
H^0(\Sigma_1, Q)\,\stackrel{\rho}{\longrightarrow}\,H^1(\Sigma_1, T\Sigma_1)
\,\longrightarrow\,H^1(\Sigma_1, u^*TX) \,\longrightarrow\,H^1(\Sigma_1, Q)
\end{equation}
be the long exact sequence of cohomologies associated to it. We have
$H^1(\Sigma_1, Q)\,=\, 0$ because $\text{degree}(Q)\, >\, 0$ (note that
$\text{degree}(u^*TX)\, >\, 0$ and $\text{degree}(T\Sigma_1)\,=\, 0$). The exact
sequence in \eqref{g1} does not split; for the corresponding extension class $\psi\, \in\,
H^1(\Sigma_1, (T\Sigma_1)\otimes Q^*)\,=\, H^1(\Sigma_1, Q^*)$, as observed before,
there is $\psi'\, \in\, H^0(\Sigma_1, Q)$ such that $\psi\cup \psi'\,\not=\, 0$. Hence
$\rho$ in \eqref{g2} is nonzero. This implies that $\rho$ is surjective because
$\dim H^1(\Sigma_1, u^*TX)\,=\, 1$. hence from \eqref{g2} we conclude that
$H^1(\Sigma_1, u^*TX)\,=\, 0$, which proves that 
the cokernel of $D_u$ is zero (i.e., $D_u$ is surjective).
\end{proof}

When $X:= \mathbb{P}^1 \times \mathbb{P}^1$, the complex structure is genus one regular; that is 
because for $\mathbb{P}^1$ the complex structure is genus one regular (by \cite[Corollary 6.5]{g2p2and3}). 
Hence, the same fact holds for products of $\mathbb{P}^1$. 

\section*{Acknowledgements}

We are very grateful to the two referees for detailed comments.
The second author is grateful to Hannah Markwig and Yoav Len for some fruitful 
discussions on this subject and informing us about the ongoing work 
\cite{Yoav_Len_Raghunathan}. We thank them for letting us know that this 
question is of interest to tropical geometers.

\end{document}